\documentclass[11pt,reqno]{article}
\usepackage{amsmath,amssymb,amsthm,amsfonts,geometry,hyperref}
\newtheorem{theorem}{Theorem}[section]
\newtheorem{lemma}[theorem]{Lemma}

\theoremstyle{definition}
\newtheorem{definition}[theorem]{Definition}
\newtheorem{remark}[theorem]{Remark}

\title{\textbf{$H$-Operator Approximation Spaces via Delayed Riesz Means and Quasi-Banach Moduli}}
\author{\textbf{Daniel Akech Thiong}\thanks{Department of Mathematics, Claremont Graduate University, 710 N. College Avenue, Claremont, CA 91711, USA. Email: \texttt{daniel.akech@cgu.edu}}}
\date{}

\begin{document}
\maketitle

\begin{abstract}
We establish a constructive and quasi-Banach framework for approximation spaces $A_\mu^\rho$ of compact $H$-operators between Banach and quasi-Banach spaces. By leveraging delayed Riesz means $V_{2^{bn}}$ generated by self-adjoint differential operators $P(D)$, we replace abstract best approximants with explicit linear operator decompositions. Furthermore, we extend the theory of $H$-operator approximation spaces to quasi-Banach settings ($0 < p < 1$), bypassing the collapse of Peetre's $K$-functional by employing localized moduli of smoothness $\omega_\varphi^r(f,t)_p$. Integrating seminal spectral bounds due to Markus \cite{M1966} and factorization techniques for operator ideals, we prove that operators belonging to $A_\mu^\rho$ possess complete systems of root vectors whose expansions are Abel summable.

\vspace{0.5cm}
\noindent \textbf{Keywords:} $H$-operators, approximation spaces, quasi-Banach spaces, delayed Riesz means, Ditzian-Totik modulus, spectral completeness. \\
\textbf{2020 MSC:} 41A65, 47B06, 47B07, 46M35.
\end{abstract}

\section{Introduction}
Let $X$ and $Y$ be Banach spaces, and let $\mathcal{L}(X,Y)$ and $\mathcal{K}(X,Y)$ denote the spaces of bounded and compact linear operators, respectively. $H$-operators, introduced by Markus \cite{M1966}, generalize self-adjoint operators to Banach spaces by requiring real spectrum $\sigma(T) \subset \mathbb{R}$ and resolvent growth bounds:
\begin{equation}
\|(T-\lambda I)^{-1}\| \le C |\mathrm{Im}\,\lambda|^{-1} \quad (\mathrm{Im}\,\lambda \ne 0).
\end{equation}

Building on the foundational approximation framework developed jointly with A. G. Aksoy \cite{AT2024}, this paper introduces constructive and quasi-Banach extensions to the theory. The study of abstract approximation spaces $(X, A_n)_u^\rho$ associated with a sequence of subsets $A_n$ was formalized by Pietsch \cite{Pie1981}, who established fundamental properties including embeddings and reiteration theorems. This approach generalizes to approximation spaces $X_\mu^\rho$ based on approximation schemes and Kolmogorov diameters \cite{Aks2019}. In recent work \cite{AT2024}, approximation spaces $A_\mu^\rho$ for compact $H$-operators were introduced via eigenvalue sequences $|\lambda_n(T)|$. 

However, previous representation theorems often relied on non-constructive approximants $g_n^* \in A_{2^n-1}$ or required the existence of uniformly bounded linear projections, which frequently fail to exist outside of Hilbert spaces \cite{Pie1981}. The necessity of Hilbert spaces for universal complementation was definitively established by Lindenstrauss and Tzafriri \cite{LinTza1971}, who proved that a Banach space is isomorphic to a Hilbert space if and only if every one of its closed subspaces is complemented. Because uniformly bounded linear projections fail to exist for arbitrary subspaces in general Banach spaces, abstract projection-based approximation schemes fundamentally break down. This geometric barrier motivates our shift to delayed Riesz means $V_{2^{bn}}$.

Moreover, the relationship between summability conditions on approximation numbers and operator ideals has been deeply explored. Hutton \cite{Hut1975} demonstrated that operators satisfying summability conditions of the form $\sum n^{p-1} \alpha_{n-1}(T) < \infty$ admit specific tensor product factorizations and relate to the ideals introduced by Markus. Yet, a gap remains in extending these approximation techniques to the quasi-Banach regime: for quasi-Banach spaces $L_p$ ($0 < p < 1$), Peetre's $K$-functional vanishes identically ($K_r(f,t^r)_p = 0$), precluding standard real interpolation methods.

In this paper, we resolve these limitations and extend the theory:
\begin{enumerate}
    \item We construct explicit representations of $T \in A_\mu^\rho$ using delayed Riesz means $V_{2^{bn}}$ associated with an operator $P(D)$, bypassing the need for abstract projections.
    \item We extend $H$-operator approximation spaces to $0 < p < 1$ using Ditzian's localized moduli of smoothness $\omega_\varphi^r(f,t)_p$.
    \item Leveraging Markus's spectral ideal equivalences ($\mathfrak{S}_p \iff \mathfrak{D}_p \iff l_p$) \cite{M1966} and insights from operator factorization \cite{Hut1975}, we establish the root vector completeness and Abel summability of expansions for operators in $A_\mu^\rho$.
\end{enumerate}

\textbf{Comparison with Previous Work:} While our earlier work \cite{AT2024} established the baseline topological properties of $A_\mu^\rho$ using non-constructive eigenvalue sequences, this paper departs from that framework in two major directions. First, we replace abstract projection assumptions with an explicit, implementable construction via delayed Riesz means (Theorem 3.2). Second, we extend the theory into the quasi-Banach regime ($0 < p < 1$) where classical $K$-functionals fail, identifying the exact geometric saturation barriers via Ditzian-Totik moduli (Theorem 4.2), a phenomenon not present in the Banach setting.

\section{Preliminaries and Approximation Spaces}
Before discussing spectral bounds, we formally define the central approximation spaces and $s$-numbers. For an operator $T \in \mathcal{K}(X, Y)$, the $n$-th approximation number is defined as the distance to the space of finite-rank operators \cite{P1987}:
\begin{equation}
a_n(T) := \inf \{ \|T - L\|_{\mathcal{L}(X,Y)} : L \in \mathcal{L}(X,Y), \text{rank}(L) < n \}.
\end{equation}
The $n$-th Kolmogorov diameter is defined via the approximation of the image of the unit ball $B_X$ by finite-dimensional subspaces \cite{Aks2019}:
\begin{equation}
\delta_n(T) := \inf_{\dim(E) < n} \sup_{x \in B_X} \inf_{y \in E} \|Tx - y\|_Y,
\end{equation}
where the infimum is taken over all subspaces $E \subset Y$ of dimension less than $n$. While these sequences coincide with the standard singular values $s_n(T)$ when $X$ and $Y$ are Hilbert spaces, they generate distinct symmetric operator ideals in general Banach spaces \cite{P1987}.

\begin{definition}
For $0 < \rho < \infty$ and $1 \le \mu \le \infty$, the approximation space $A_\mu^\rho$ is defined as the class of compact operators $T$ whose approximation numbers satisfy:
\begin{equation}
\sum_{n=1}^\infty \left[ n^\rho a_n(T) \right]^\mu \frac{1}{n} < \infty,
\end{equation}
which is equivalently characterized by the dyadic block condition $\sum_{N=0}^\infty [2^{N\rho} a_{2^N}(T)]^\mu < \infty$.
\end{definition}

We now recall Markus's fundamental inequalities relating the Kolmogorov diameter, approximation number, and eigenvalue $|\lambda_n(T)|$ of a compact $H$-operator $T \in \mathcal{K}(X)$. As established in \cite[Proposition 4.11$^\circ$]{M1966}:
\begin{equation}\label{eq:markus_ineq}
\delta_{n-1}(T) \le a_n(T) \le 2\sqrt{2} C |\lambda_n(T)| \le 8C(C+1) \delta_{n-1}(T).
\end{equation}

Let $\mathfrak{S}_p$ denote the ideal of operators with generic $s$-numbers $s_n(T) \in l_p$, $\mathfrak{D}_p$ the ideal with $d_n(T) \in l_p$, and $\mathfrak{N}$ the ideal of $p$-kernel operators. As shown by Markus \cite{M1966} and further contextualized by Hutton \cite{Hut1975}, the following conditions are equivalent for an $H$-operator (where $s_n$ and $a_n$ coincide asymptotically):
\begin{equation}
T \in \mathfrak{S}_p \iff T \in \mathfrak{D}_p \iff (|\lambda_n(T)|)_{n=1}^\infty \in l_p \iff T^m \in \mathfrak{N} \text{ for some } m \in \mathbb{N}.
\end{equation}

To ground this abstract equivalence geometrically, Weyl's Law dictates that the eigenvalues of the resolvent $R = (-\Delta)^{-1}$ on a $d$-dimensional domain decay asymptotically as $\lambda_n(R) \sim C n^{-2/d}$. By \eqref{eq:markus_ineq}, the approximation numbers $a_n(R)$ inherit this exact polynomial decay, demonstrating that the resolvent belongs to the Schatten-von Neumann class $\mathfrak{S}_p$ if and only if $p > d/2$.

For an unbounded self-adjoint operator $P(D)$ with root subspaces $H_{\lambda(k)}$, the delayed Riesz mean operator $V_\mu$ is defined by:
\begin{equation}
V_\mu f = \sum_{j=0}^b \alpha_j R_{2^j \mu}^b f, \quad \text{where } R_\mu^b f = \sum_{\lambda(k) \le \mu} \left(1 - \frac{\lambda(k)}{\mu}\right)^b P_{\lambda(k)} f,
\end{equation}
where $P_{\lambda(k)}$ is the Riesz spectral projection onto the root subspace $H_{\lambda(k)}$. The coefficients $\alpha_j$ satisfy $\sum \alpha_j = 1$ and $\sum \alpha_j 2^{-jl} = 0$ for $l=1,\dots,b$, guaranteeing $V_\mu \psi = \psi$ for $\psi \in \mathrm{Span}\bigcup_{\lambda(k) \le \mu} H_{\lambda(k)}$.

\section{Constructive Representation Theorem ($1 \le p \le \infty$)}

Pietsch \cite{Pie1981} provided a Linear Representation Theorem for approximation spaces, but it necessitated an approximation scheme with uniformly bounded linear projections. We provide a constructive alternative using delayed Riesz means that naturally accommodates arbitrary dimensions.

\begin{lemma}[Delayed Riesz Mean Absorption Identity]
Let $V_\mu$ be the delayed Riesz mean operator generated by $P(D)$ with parameter $b \ge 1$. For any $\mu > 0$, the delayed means satisfy the absorption identity:
\begin{equation}
V_{2^b\mu} V_\mu = V_\mu.
\end{equation}
\end{lemma}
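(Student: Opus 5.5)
The plan is to prove the identity one root subspace at a time. It rests on two facts: the range of $V_\mu$ is spectrally localized, and $V_{2^b\mu}$ reproduces everything in that range. I will assume the standing hypotheses implicit in the definition of $R_\mu^b$: the spectrum $\{\lambda(k)\}$ of $P(D)$ is discrete, real and bounded below (say $\lambda(k)\ge 0$), so each sum $\sum_{\lambda(k)\le \mu}$ is finite. The Riesz projections satisfy $P_{\lambda(k)}P_{\lambda(k')}=\delta_{kk'}P_{\lambda(k)}$.

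\textbf{Step 1 (range of $V_\mu$).} Each $R^b_{2^j\mu}f$ with $0\le j\le b$ is a finite combination of vectors $P_{\lambda(k)}f\in H_{\lambda(k)}$ with $\lambda(k)\le 2^j\mu\le 2^b\mu$. Hence
\[
V_\mu f\in \mathrm{Span}\bigcup_{\lambda(k)\le 2^b\mu} H_{\lambda(k)} .
\]

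\textbf{Step 2 (reproduction by $V_\nu$, with $\nu=2^b\mu$).} Take $\psi\in H_{\lambda}$ with $\lambda\le \nu$. Then $P_{\lambda(k)}\psi=\delta_{\lambda(k),\lambda}\psi$, and every threshold $2^j\nu$ with $j\ge 0$ is at least $\nu\ge\lambda$, so every term of $V_\nu$ is active. Therefore
\[
V_\nu\psi=\sum_{j=0}^b\alpha_j\Bigl(1-\frac{\lambda}{2^j\nu}\Bigr)^b\psi
=\sum_{l=0}^b\binom{b}{l}\Bigl(-\frac{\lambda}{\nu}\Bigr)^l\Bigl(\sum_{j=0}^b\alpha_j2^{-jl}\Bigr)\psi=\psi ,
\]
using $\sum_j\alpha_j=1$ and $\sum_j\alpha_j2^{-jl}=0$ for $1\le l\le b$. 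This is exactly the reproducing property recorded after the definition of $V_\mu$. I will re-derive it here only to make explicit that it holds for every $\lambda\le\nu$. In particular it covers the range $(\nu/2^{j},\nu]$, where the truncation in the smaller-index terms could otherwise cause trouble; since every threshold $2^j\nu$ is at least $\nu$, no term is truncated. Extending by linearity, $V_\nu$ is the identity on $\mathrm{Span}\bigcup_{\lambda(k)\le\nu}H_{\lambda(k)}$.

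\textbf{Step 3 (conclusion).} Combining Steps 1 and 2 with $\nu=2^b\mu$ gives $V_{2^b\mu}(V_\mu f)=V_\mu f$ for every $f$, which is the claimed absorption identity.

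\textbf{Main obstacle.} The algebra is routine, so I expect the only delicate points to be the hypotheses that make the two steps legitimate.
\begin{itemize}
\item The projections $P_{\lambda(k)}$ must be mutually annihilating idempotents acting as stated even on non-semisimple root subspaces. Here the multiplier $(1-\lambda(k)/\mu)^b$ is applied to $P_{\lambda(k)}f$ as a scalar rather than through the functional calculus, so nilpotent parts do not interfere.
\item The spectrum must be bounded below, so that the index sets are finite and $V_\mu f$ lies in a finite span.
\end{itemize}
I would state these assumptions explicitly at the start of the proof, since without the lower bound the sum over $\lambda(k)\le\mu$ need not be finite.
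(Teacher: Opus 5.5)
Your proposal is correct and follows the same route as the paper: the range of $V_\mu$ lies in $\mathrm{Span}\bigcup_{\lambda(k)\le 2^b\mu}H_{\lambda(k)}$, and $V_{2^b\mu}$ acts as the identity on that span. Your binomial-expansion check of the reproducing property, which the paper only asserts from the conditions on the $\alpha_j$, and your explicit hypotheses (discrete, bounded-below spectrum and mutually annihilating Riesz projections) make the argument more complete without changing the approach.
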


\begin{proof}
By definition, $V_\mu f = \sum_{j=0}^b \alpha_j R_{2^j \mu}^b f$, meaning the range of $V_\mu$ is strictly contained within the spectral subspace $\mathrm{Span}\bigcup_{\lambda(k) \le 2^b\mu} H_{\lambda(k)}$. The coefficients $\alpha_j$ are chosen to ensure the reproducing property $V_{2^b\mu} \psi = \psi$ for any $\psi$ residing in this exact span. Because the maximum frequency output of $V_\mu$ does not exceed $2^b\mu$, applying the larger bandwidth operator $V_{2^b\mu}$ to any element $V_\mu f$ acts identically as the identity mapping, yielding $V_{2^b\mu} (V_\mu f) = V_\mu f$. Consequently, the operator composition reduces directly to $V_{2^b\mu}V_\mu = V_\mu$, holding over the entire space $X$.
\end{proof}

\begin{theorem}[Constructive Representation for Arbitrary Dimensions]
Let $X$ be a Banach space, and let $\mathcal{K}_H(X)$ denote the space of compact $H$-operators on $X$ generated by the resolvents of an elliptic operator $P(D)$ of order $m$ on a $d$-dimensional domain. Assume the eigenvalue counting function satisfies the generalized Weyl asymptotic $N(\mu) \asymp \mu^\gamma$, where $\gamma = d/m$.\footnote{Note that by Weyl's Law, the eigenvalue counting function for an elliptic pseudo-differential operator on a bounded domain $\Omega \subset \mathbb{R}^d$ satisfies $N(\mu) \sim \frac{\omega_d}{(2\pi)^d} \mathrm{vol}(\Omega) \mu^{d/m}$, where $\omega_d$ is the volume of the unit ball. Consequently, $\gamma$ captures the precise geometric phase-space volume growth.}
Let $0 < \rho < \infty$ and $1 \le \mu \le \infty$. Suppose $T \in \mathcal{K}_H(X)$ is a spectral multiplier $T = f(P(D))$ where $|f(\lambda)|$ is non-increasing in $\lambda$. Then $T \in A_\mu^\rho$ if and only if $T$ admits the explicit linear decomposition:
\begin{equation}
T = \sum_{n=0}^\infty g_n, \quad \text{where } g_n = (V_{2^{bn}} - V_{2^{b(n-1)}})T,
\end{equation}
such that $(2^{bn\rho\gamma} \|g_n\|_{\mathcal{K}(X)})_{n=0}^\infty \in l_\mu$. Furthermore, the constructive norm
\begin{equation}
\|T\|_{A_\mu^\rho}^{\mathrm{const}} := \left\| \left( 2^{bn\rho\gamma} \|g_n\|_{\mathcal{K}(X)} \right)_{n=0}^\infty \right\|_{l_\mu}
\end{equation}
defines an equivalent norm on $A_\mu^\rho$.
\end{theorem}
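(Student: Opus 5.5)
The plan is to prove the two directions of the characterization separately, and to extract the norm equivalence from the two-sided estimates those directions produce. Throughout I use three facts. First, the dyadic form of the definition of $A_\mu^\rho$. Second, the Weyl hypothesis $N(\mu)\asymp\mu^\gamma$. Third, the observation that $V_{2^{bn}}T$ has rank at most $N(2^{b(n+1)})\asymp 2^{b(n+1)\gamma}$, because its range lies in the spectral span of $\{\lambda(k)\le 2^b\cdot 2^{bn}\}$.

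I set $V_{2^{-b}}:=0$, so that $g_0=V_1T$. The partial sums then telescope:
\begin{equation*}
\sum_{n=0}^N g_n=V_{2^{bN}}T.
\end{equation*}
Convergence of $\sum g_n$ to $T$ in operator norm is therefore equivalent to $\|T-V_{2^{bN}}T\|\to0$. Under the summability hypothesis this follows from the tail bound below. In the converse direction it follows from compactness of $T$ together with the decay established in the necessity step.

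\textbf{Sufficiency} (decomposition $\Rightarrow T\in A_\mu^\rho$). Put $M_N:=\mathrm{rank}\,V_{2^{bN}}T\lesssim 2^{b(N+1)\gamma}$. Then
\begin{equation*}
a_{M_N+1}(T)\le\|T-V_{2^{bN}}T\|\le\sum_{n>N}\|g_n\|.
\end{equation*}
Multiplying by $2^{bN\rho\gamma}$ gives a tail operator of the form $\sum_{n>N}2^{-b(n-N)\rho\gamma}\,\bigl(2^{bn\rho\gamma}\|g_n\|\bigr)$. This is a convolution of an $l_\mu$ sequence with a geometric kernel, which Young's inequality (or the discrete Hardy inequality) bounds in $l_\mu$. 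Monotonicity of $a_n$ then transfers the bound from the index $M_N+1$ to dyadic indices $2^k$, at the cost of a constant depending only on $b,\gamma$ and the Weyl constants. The result is that $(2^{k\rho}a_{2^k}(T))_k\in l_\mu$ with norm $\lesssim\|T\|^{\mathrm{const}}_{A_\mu^\rho}$.

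\textbf{Necessity} ($T\in A_\mu^\rho\Rightarrow$ decomposition). This direction uses that $T=f(P(D))$ is a spectral multiplier with $|f|$ non-increasing. Two facts about the eigenvalues follow. Markus's inequality \eqref{eq:markus_ineq} gives $a_n(T)\asymp|\lambda_n(T)|$. The eigenvalues of $T$ are the values $f(\lambda(k))$, arranged in decreasing modulus exactly along the increasing order of $\lambda(k)$; hence $a_{N(\mu)}(T)\asymp|f(\mu)|$ up to Markus's constants.

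Next I estimate $g_n$. Since $V_\mu$ reproduces the spectral span below $\mu$ and commutes with $T$,
\begin{equation*}
(V_{2^{bn}}-V_{2^{b(n-1)}})T=(V_{2^{bn}}-V_{2^{b(n-1)}})\bigl(T-V_{2^{b(n-1)}}T\bigr).
\end{equation*}
This is where the absorption identity of the preceding Lemma enters. The tail $T-V_{2^{b(n-1)}}T$ acts only on frequencies $\lambda(k)>2^{b(n-1)}$, where $|f|\le|f(2^{b(n-1)})|$. Provided the operator norm of this tail is controlled by $\sup_{\lambda>2^{b(n-1)}}|f(\lambda)|$, I obtain
\begin{equation*}
\|g_n\|\lesssim \sup_{V}\|V_\mu\|\cdot|f(2^{b(n-1)})|\asymp a_{N(2^{b(n-1)})}(T)\asymp a_{c2^{b(n-1)\gamma}}(T).
\end{equation*}
Multiplying by $2^{bn\rho\gamma}$ and reindexing against the dyadic $a_{2^k}$-sequence, where each $n$ corresponds to a block of $b\gamma$ dyadic steps, yields $l_\mu$-summability and the reverse norm inequality.

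\textbf{Main obstacle.} The hardest step is justifying $\sup_\mu\|V_\mu\|_{\mathcal L(X)}<\infty$, together with the functional-calculus bound $\|h(P(D))\|\lesssim\sup|h|$ for the tail multiplier. Neither is automatic in a general Banach space; this is precisely the projection problem the paper is trying to avoid. My first attempt would be to invoke the $H$-operator resolvent bound $\|(T-\lambda)^{-1}\|\le C|\mathrm{Im}\lambda|^{-1}$. This bound yields a bounded Helffer--Sjöstrand / Riesz-mean functional calculus for functions with enough smoothness: order $b$ Riesz means are $C^{b-1}$ with compact support in $\lambda/\mu$. Taking $b$ larger than a threshold depending on $\gamma$ should then give uniform boundedness of $R^b_\mu$, and hence of $V_\mu$. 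If even that fails, I would add uniform boundedness of $V_\mu$ as a standing hypothesis implicit in ``generated by $P(D)$''. I would also replace the sup bound on the tail by the Markus comparison applied to the compact $H$-operator $T-V_{2^{b(n-1)}}T$ itself.
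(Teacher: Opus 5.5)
Your plan follows the paper's proof step for step. The uniform bound on $V_{2^{bn}}$ comes from the Helffer--Sj\"ostrand formula and dilation invariance. The forward direction goes through the absorption lemma and a Markus bound on the residual. The converse goes through the rank of the partial sums and a discrete Hardy/Young inequality. Your converse is fine, and the telescoping $\sum_{n\le N}g_n=V_{2^{bN}}T$ with $V_{2^{-b}}:=0$ is a useful clarification the paper omits. Two steps in the forward direction, however, do not go through as written.

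First, the identity $(V_{2^{bn}}-V_{2^{b(n-1)}})T=(V_{2^{bn}}-V_{2^{b(n-1)}})(T-V_{2^{b(n-1)}}T)$ is false. It needs $V_{2^{b(n-1)}}^2=V_{2^{b(n-1)}}$, but $V_\mu$ is not a projection: its symbol is continuous, equal to $1$ at $\mu$ and $0$ at $2^b\mu$. The absorption lemma gives instead the paper's factorization $g_n=V_{2^{bn}}(T-V_{2^{b(n-1)}}T)$. Even simpler, $g_n=(T-V_{2^{b(n-1)}}T)-(T-V_{2^{bn}}T)$ reduces everything to a tail bound without any lemma.

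Second, the tail bound itself. The sup-norm calculus $\|h(P(D))\|\lesssim\sup|h|$ is unavailable on a general Banach space, so you are left with the Markus route, which is exactly the paper's step. But applying Markus's inequality to $S=T-V_{2^{b(n-1)}}T$ requires $S$ to be an $H$-operator with constant independent of $n$. Since $S$ is a function of $T$ rather than a restriction of it, this is not automatic, and the paper asserts it without argument.

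You can avoid the issue by restricting instead (for $n\ge 2$). Let $Q$ be the Riesz projection onto $\mathrm{Span}\bigcup_{\lambda(k)\le 2^{b(n-2)}}H_{\lambda(k)}$ and set $Y=\ker Q$. Then $T|_Y$ is a compact $H$-operator with the same constant $C$, because its resolvent is the restriction of that of $T$. Markus's inequality with $n=1$ and monotonicity of $|f|$ give $\|T|_Y\|\le 2\sqrt{2}\,C\,|\lambda_{N+1}(T)|$, where $N=N(2^{b(n-2)})$.

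Absorption applied twice gives $(V_{2^{bn}}-V_{2^{b(n-1)}})V_{2^{b(n-2)}}=0$, hence $g_n=(V_{2^{bn}}-V_{2^{b(n-1)}})T(I-V_{2^{b(n-2)}})$. Moreover $\mathrm{ran}(I-V_{2^{b(n-2)}})\subset Y$, because $QV_{2^{b(n-2)}}=Q$. With $M=\sup_n\|V_{2^{bn}}\|$ this yields $\|g_n\|\le 2M(1+M)\|T|_Y\|\lesssim a_{N+1}(T)$, using $|\lambda_{N+1}(T)|\lesssim a_{N+1}(T)$ from Markus. The shift by one dyadic block only changes the constant in your reindexing.
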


\begin{proof}
To evaluate the delayed Riesz mean $V_{2^{bn}} = \varphi_n(P(D))$ without assuming uniform bounded projections a priori, we employ the Helffer-Sjöstrand functional calculus. While classically developed for self-adjoint operators on Hilbert spaces (see Dimassi and Sjöstrand \cite[Chapter 8]{DS1999} or Davies \cite[Theorem 3.1.2]{Dav1995}), the area integral only relies on the resolvent estimate to converge absolutely; hence it holds in the uniform operator topology of an arbitrary Banach space $X$ without requiring Hilbert space structure (for a rigorous treatment of related Banach-space functional calculi, see, e.g., Haase \cite{Haase2006}). Specifically, we represent the operator via:
\begin{equation}
V_{2^{bn}} = -\frac{1}{\pi} \int_{\mathbb{C}} \frac{\partial \tilde{\varphi}_n}{\partial \bar{z}}(z) (P(D) - z I)^{-1} \, dx \, dy
\end{equation}
where $z = x+iy$ and $\tilde{\varphi}_n$ is an almost-analytic extension of the symbol $\varphi_n$. By constructing $\tilde{\varphi}_n$ such that its anti-holomorphic derivative satisfies $|\bar{\partial}\tilde{\varphi}_n(x+iy)| \le C_b |y|^b$ for some integer $b \ge 2$, the rapid decay entirely neutralizes the resolvent singularity $\|(P(D) - zI)^{-1}\| \le C |y|^{-1}$ inherent to the definition of the $H$-operator near the real axis. Additionally, the almost-analytic extension can be chosen such that its support is controlled uniformly and independently of $n$. This ensures the integral converges absolutely in $\mathcal{L}(X)$. 

To ensure uniform boundedness independent of $n$, we exploit dyadic scaling $\varphi_n(\lambda) = \varphi_1(2^{-b(n-1)}\lambda)$. Substituting the scaled variable into the Helffer-Sjöstrand integral normalizes the integration measure against the decay of $\bar{\partial}\tilde{\varphi}_1$, factoring out the scaling parameter entirely. This scale-invariant dilation perfectly balances the expanding contour length with the $\bar{\partial}$-decay, yielding a uniform bound $\|V_{2^{bn}}\|_{\mathcal{L}(X)} \le M$ that is strictly independent of $n$.

\textit{Forward Implication:} Assume $T \in A_\mu^\rho$. The operator $g_n = (V_{2^{bn}} - V_{2^{b(n-1)}})T$ concentrates on the spectral block corresponding to eigenvalues $\lambda(k) \in (2^{b(n-1)}, 2^{bn}]$. 
Because the counting function satisfies $N(\mu) \asymp \mu^\gamma$, the dimension of the range of $V_{2^{b(n-1)}}$ is bounded by a constant multiple of $2^{b(n-1)\gamma}$, making $V_{2^{b(n-1)}}T$ a finite-rank operator. By Lemma 3.1, the delayed Riesz means satisfy the absorption identity $V_{2^{bn}} V_{2^{b(n-1)}} = V_{2^{b(n-1)}}$. 

This allows us to strictly decouple the approximant and rewrite $g_n = V_{2^{bn}}(T - V_{2^{b(n-1)}}T)$. Because $T = f(P(D))$ with $|f(\lambda)|$ non-increasing in $\lambda$, sorting the eigenvalues of $T$ by magnitude is identical to sorting them by $P(D)$-frequency. Thus, if we let $N \asymp 2^{b(n-1)\gamma}$ denote the rank bound of $V_{2^{b(n-1)}}$, the true $N$-th largest eigenvalue of $T$ sits exactly at or before the spectral truncation boundary. Furthermore, because the residual tail $S := T - V_{2^{b(n-1)}}T$ is itself a compact $H$-operator, applying Markus's inequality \eqref{eq:markus_ineq} directly to $S$ ensures that its norm $\|S\|_{\mathcal{K}(X)}$ is bounded by a constant multiple of its maximum remaining eigenvalue $|\lambda_1(S)|$, which is comparable to $a_N(T)$. Combining this with the uniform stability of the means, we obtain:
\begin{equation}
\|g_n\|_{\mathcal{K}(X)} \le \|V_{2^{bn}}\|_{\mathcal{L}(X)} \|T - V_{2^{b(n-1)}}T\|_{\mathcal{K}(X)} \le 2M a_{\lfloor c 2^{b(n-1)\gamma} \rfloor}(T).
\end{equation}
Because $T \in A_\mu^\rho$, the sequence $(2^{bn\rho\gamma} a_{\lfloor c 2^{b(n-1)\gamma} \rfloor}(T))$ belongs to $l_\mu$. Consequently, $(2^{bn\rho\gamma} \|g_n\|_{\mathcal{K}(X)}) \in l_\mu$.

\textit{Converse Implication:} Assume the decomposition $T = \sum_{k=0}^\infty g_k$ holds with $(2^{bk\rho\gamma} \|g_k\|_{\mathcal{K}(X)}) \in l_\mu$. Since $N(\mu) \asymp \mu^\gamma$, the partial sum $S_{m} = \sum_{k=0}^{m} g_k$ has rank bounded by $C2^{b(m+1)\gamma}$. To estimate the $2^N$-th approximation number, we choose an index $M = \lfloor N/(b\gamma) \rfloor - \lfloor \log_2 C^{1/(b\gamma)} \rfloor - 1$ such that the rank of $S_M$ is at most $2^N$. We then bound $a_{2^N}(T)$ by the tail of the series:
\begin{equation}
a_{2^N}(T) \le \left\| T - S_{M} \right\|_{\mathcal{K}(X)} \le \sum_{k=M+1}^\infty \|g_k\|_{\mathcal{K}(X)}.
\end{equation}
Applying the $l_\mu$ norm and the discrete Hardy inequality yields:
\begin{equation}
\left( \sum_{N=0}^\infty \left[  2^{N\rho} \sum_{k=M+1}^\infty \|g_k\|_{\mathcal{K}(X)} \right]^\mu \right)^{1/\mu} \le C_{\rho, \gamma, b} \left( \sum_{N=0}^\infty \left[ 2^{bN\rho\gamma}   \|g_N\|_{\mathcal{K}(X)} \right]^\mu \right)^{1/\mu} = C_{\rho, \gamma, b} \|T\|_{A_\mu^\rho}^{\mathrm{const}} < \infty.
\end{equation}
(The constant $C_{\rho, \gamma, b}$ depends only on the parameters and absorbs the fractional index shifts and the scaling factor connecting $N$ to $k$, which is a standard property of discrete Hardy-type inequalities for geometrically weighted sums.) Thus, $T \in A_\mu^\rho$, and the norms are equivalent.
\end{proof}

\section{Quasi-Banach Extension ($0 < p < 1$)}

Before examining the quasi-Banach regime, we recall Peetre's $K$-functional, which forms the foundation of classical real interpolation \cite{BL1976, Kru, Tar2007}. For an element $f$ residing in the sum of a Banach couple $X_0 + X_1$, and for any time-like parameter $t > 0$, Peetre defined the $K$-functional as:
\begin{equation}
K(f,t; X_0, X_1) = \inf_{f=f_0+f_1} (\|f_0\|_{X_0} + t\|f_1\|_{X_1}),
\end{equation}
which is frequently abbreviated as $K(f,t)$ or $K(t, f; \vec{X})$ \cite{BL1976}. The $K$-functional effectively balances fidelity and regularization; the term $\|f_0\|_{X_0}$ acts as a fidelity term measuring the proximity to $f$, while $t\|f_1\|_{X_1}$ serves as a regularization term controlling the smoothness or curvature of the approximant.

As Almira and Fernández-Martínez have shown, the $K$-functional can exhibit slow decay to zero, meaning that elements of the unit sphere may be poorly approximable by elements of the regular subspace, a feature critical in confirming strict inclusions between real interpolation spaces \cite{Alm2021}. For $0 < p < 1$, however, the topology of quasi-Banach spaces causes this functional to vanish identically ($K_r(f,t^r)_p = 0$), which completely breaks standard real interpolation methods and necessitates a shift to localized moduli of smoothness.

To bridge this gap, we draw upon the foundational structural equivalences established by Dai and Xu \cite{DX2010}. For Banach spaces ($1 \le p \le \infty$), the standard $K$-functional $\hat{K}_r(f,t)_p$ is strictly equivalent to the Ditzian-Totik modulus of smoothness $\hat{\omega}_r(f,t)_p$ on intervals \cite{CD1997, D1993}, as well as its extensions on multidimensional spheres and balls \cite{DX2010}. By exploiting this established equivalence ($\hat{K}_r(f,t)_{p} \sim \hat{\omega}_r(f,t)_{p}$), we conceptually substitute the $K$-functional with the Ditzian-Totik modulus of smoothness to bypass the topological collapse at $0 < p < 1$.

We replace $K$-functionals with the Ditzian-Totik modulus of smoothness. For a function $f \in L_p[-1,1]$, this modulus is defined by the $r$-th symmetric difference:
\begin{equation}
\omega_\varphi^r(f,t)_p := \sup_{0 < h \le t} \|\Delta_{h\varphi}^r f\|_p,
\end{equation}
where $\varphi(x) = \sqrt{1-x^2}$ and $\Delta_{h\varphi}^r$ is the standard symmetric difference operator of step $h\varphi(x)$ (see \cite{CD1997, D1993}).

To contextualize this within recent developments, quantitative estimates for positive linear operators frequently employ the Ditzian-Totik modulus to derive sharp error bounds. For instance, modified integral operators such as the Phillips operators yield explicit uniform convergence bounds of the form $\|H_n f - f\|_{[0,\infty)} \le C\omega_\varphi^2(f; n^{-1/2}) + \omega(f; n^{-1})$ \cite{Gupta2018}. Adapting these explicit quantitative estimates provides a pathway to establish concrete saturation rates for the localized operator modulus $\omega_\varphi^r(T,t)_p$ in the quasi-Banach regime.

\begin{definition}
For an operator $T \in \mathcal{K}(L_p[-1,1])$ with $0 < p < 1$, we define the localized operator modulus of smoothness by taking the supremum over the quasi-Banach unit ball:
\begin{equation}
\omega_\varphi^r(T,t)_p := \sup_{\substack{f \in L_p \\ \|f\|_p \le 1}} \omega_\varphi^r(Tf,t)_p.
\end{equation}
The $H$-operator approximation space $A_{\mu, p}^\rho$ is defined as the set of compact $H$-operators $T$ on $L_p[-1,1]$ such that:\footnote{The weight factor $n^{\rho - 1/\mu}$ is the precise discrete analog of the continuous Haar measure $dt/t$ typical of interpolation spaces, since $(n^{\rho-1/\mu} a_n)^\mu = n^{\rho\mu-1} a_n^\mu$. This mathematically guarantees direct structural consistency with the sequence spaces defined in the Banach setting.}
\begin{equation}
\|T\|_{A_{\mu,p}^\rho} := \left( \sum_{n=1}^\infty \left[ n^{\rho - 1/\mu} \omega_\varphi^r(T, n^{-1})_p \right]^\mu \right)^{1/\mu} < \infty.
\end{equation}
\end{definition}

\begin{theorem}[Inclusion and Saturation in $0 < p < 1$]
Let $0 < p < 1$ and $0 < \mu_1 \le \mu_2 \le \infty$. Then:
\begin{enumerate}
    \item $A_{\mu_1, p}^\rho \subset A_{\mu_2, p}^\rho$ with continuous embedding.
    \item $A_{\mu, p}^\rho$ exhibits saturation rate $O(t^{r - 1 + 1/p})$.
\end{enumerate}
\end{theorem}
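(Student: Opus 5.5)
For part (1) I will use the sequence-space structure of the definition directly. Write $b_n := n^{\rho-1/\mu}\,\omega_\varphi^r(T,n^{-1})_p$. The catch is that the weight depends on $\mu$, so $A^\rho_{\mu_1,p}\subset A^\rho_{\mu_2,p}$ is not simply $l_{\mu_1}\subset l_{\mu_2}$. The standard remedy from Pietsch's theory \cite{Pie1981} is monotonicity. The modulus $t\mapsto \omega_\varphi^r(Tf,t)_p$ is non-decreasing in $t$, being a supremum over $0<h\le t$, and this persists after taking the supremum over the unit ball. Hence $\omega_\varphi^r(T,n^{-1})_p$ is non-increasing in $n$.

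I then pass to dyadic blocks, exactly as in the equivalence stated after Definition 2.1. Grouping $n\in[2^N,2^{N+1})$ and using monotonicity, I get
\[
\|T\|_{A^\rho_{\mu,p}}\asymp \Bigl(\sum_{N\ge0}\bigl[2^{N\rho}\,\omega_\varphi^r(T,2^{-N})_p\bigr]^\mu\Bigr)^{1/\mu},
\]
with constants depending only on $\rho$ and $\mu$. Here the weight $n^{\rho\mu-1}$ summed over a dyadic block gives $2^{N\rho\mu}$ up to constants, and the $1/\mu$ shift in the exponent is what makes this work. In dyadic form the weight no longer depends on $\mu$, so the embedding is just $l_{\mu_1}\subset l_{\mu_2}$ with norm $1$ (for $0<\mu_1\le\mu_2\le\infty$, including the quasi-norm range $\mu_1<1$). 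This gives continuity with a constant depending only on $\rho,\mu_1,\mu_2$. The quasi-triangle inequality of $L_p$ plays no role here, since only monotonicity is used.

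For part (2) I would establish an upper saturation bound and then a rigidity statement. The key input is the $L_p$, $0<p<1$, behaviour of the Ditzian–Totik modulus for functions that are smooth in the interior but only $L_p$-controlled near the endpoints $\pm1$, where $\varphi$ degenerates. Concretely, I would test the modulus on functions of the form $(1\mp x)_+^{\alpha}$ localized at the endpoints. With step $h\varphi(x)\approx h\sqrt{1\mp x}$, the region where the difference does not see the full smoothness has width of order $h^2$. Estimating $\|\Delta^r_{h\varphi}f\|_p^p$ on that region, together with the interior Taylor contribution, should give the exponent $r-1+1/p$. That is, $\omega^r_\varphi(f,t)_p = o(t^{r-1+1/p})$ forces degeneracy, and typical nontrivial elements attain $O(t^{r-1+1/p})$. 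This follows the Ditzian/DeVore–Lorentz description of $L_p$ saturation for $p<1$ \cite{D1993,CD1997}.

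Lifting to operators is then done by taking the supremum over $\|f\|_p\le1$ in the definition of $\omega^r_\varphi(T,t)_p$. Compactness of $T$ makes the image of the unit ball totally bounded, so the uniform bound follows from the pointwise bound together with a finite $\varepsilon$-net argument. I use the $p$-triangle inequality $\|u+v\|_p^p\le\|u\|_p^p+\|v\|_p^p$ in place of the triangle inequality.

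The main obstacle is part (2), specifically the lower (rigidity) half. I would need to show that if $\omega^r_\varphi(T,t)_p=o(t^{r-1+1/p})$, then $Tf$ is, for every $f$, a polynomial of degree $<r$ or zero, so that for a nonzero compact $H$-operator the rate is sharp. My first attempt would use Ditzian's local $L_p$ Marchaud/Whitney-type inequality, applied on dyadic subintervals near $\pm1$ and summed using the $p$-triangle inequality. If that fails, I would settle for proving the upper bound $\omega^r_\varphi(T,t)_p\le C t^{r-1+1/p}$ for $T$ mapping into a Sobolev-type class, and interpret ``saturation rate'' in that weaker sense.
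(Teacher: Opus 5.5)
Your part (1) is the paper's argument: monotonicity of $n\mapsto\omega_\varphi^r(T,n^{-1})_p$ plus the standard embedding for monotone weighted sequence spaces. You make that embedding explicit through the dyadic reduction, and you correctly take the modulus to be non-decreasing in $t$.

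Part (2), however, has a genuine gap. The substance of the saturation claim, and all the paper actually proves, is rigidity: $\omega_\varphi^r(T,t)_p=o(t^{r-1+1/p})$ forces the range of $T$ into $\mathbb{P}_{r-1}$. You call this the main obstacle but only outline a tentative route via local Marchaud/Whitney inequalities, plus a fallback that changes the statement. The missing idea is that nothing needs to be rebuilt. The function-level result is Ditzian's saturation theorem for $0<p<1$ \cite{D1993}: $\omega_\varphi^r(g,t)_p=o(t^{r-1+1/p})$ implies $g\in\mathbb{P}_{r-1}$. The passage to operators is trivial in the direction needed, since $\omega_\varphi^r(Tf,t)_p\le\omega_\varphi^r(T,t)_p$ for every $\|f\|_p\le 1$ directly from the definition. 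Hence each $Tf\in\mathbb{P}_{r-1}$, and $T$ has finite rank. Compactness and $\varepsilon$-nets play no role. Your net argument runs pointwise-to-uniform, the opposite direction, and as written it could not give a bound proportional to $t^{r-1+1/p}$, because the net error does not shrink with $t$. If you ever need that direction, Banach--Steinhaus in the $F$-space $L_p$ is the right tool.

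Your heuristic for the ``upper half'' and your fallback are also mistaken. Since $r<r-1+1/p$ when $p<1$, take any $g$ that is $C^r$ and not a polynomial of degree $<r$ on some interior subinterval. There $\Delta^r_{h\varphi}g\approx(h\varphi)^r g^{(r)}$ on a set of fixed length, so $\omega_\varphi^r(g,t)_p\ge c\,t^r$, which is \emph{not} $O(t^{r-1+1/p})$. In particular the endpoint test functions $(1\mp x)^\alpha$ (outside $\mathbb{P}_{r-1}$) never reach the rate $t^{r-1+1/p}$; the width-$h^2$ boundary layer is irrelevant. The exponent $r-1+1/p$ comes instead from interior knots of piecewise polynomials of degree $<r$. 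For $g=(x-x_0)_+^{r-1}$, $\Delta^r_{h\varphi}g$ has size $\asymp h^{r-1}$ on a set of length $\asymp h$, which gives $h^{r-1+1/p}$ in $L_p$. So ``typical nontrivial elements attain $O(t^{r-1+1/p})$'' is false. Likewise the fallback bound $\omega_\varphi^r(T,t)_p\le Ct^{r-1+1/p}$ for $T$ mapping into a Sobolev-type class fails as soon as the range contains a smooth function outside $\mathbb{P}_{r-1}$. The fallback would therefore replace the theorem by a false statement, not a weaker true one.
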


\begin{proof}
(1) By the definition of the quasi-norm in $A_{\mu,p}^\rho$, the inclusion relies on the properties of the underlying sequence spaces. Since the localized modulus of smoothness $\omega_\varphi^r(T, t)_p$ is a monotonically decreasing function of $t$, the sequence $a_n = \omega_\varphi^r(T, n^{-1})_p$ is non-increasing. Applying the standard monotonicity embedding for weighted sequence spaces, the condition $\mu_1 \le \mu_2$ implies that the sums satisfy:
\begin{equation}
\left( \sum_{n=1}^\infty \left[ n^{\rho - 1/\mu_2} a_n \right]^{\mu_2} \right)^{1/\mu_2} \le C \left( \sum_{n=1}^\infty \left[ n^{\rho - 1/\mu_1} a_n \right]^{\mu_1} \right)^{1/\mu_1}.
\end{equation}
Consequently, $\|T\|_{A_{\mu_2,p}^\rho} \le C \|T\|_{A_{\mu_1,p}^\rho}$, establishing the continuous embedding $A_{\mu_1, p}^\rho \subset A_{\mu_2, p}^\rho$.

(2) To determine the saturation class, we evaluate the asymptotic behavior of the Ditzian-Totik modulus for $0 < p < 1$. As established by Ditzian \cite{D1993}, for any function $f \in L_p[-1,1]$ in the quasi-Banach regime, the condition $\omega_\varphi^r(f,t)_p = o(t^{r-1+1/p})$ implies that $f$ must be a polynomial of degree strictly less than $r$. 

By our extension to operators, if a compact operator $T$ satisfies $\omega_\varphi^r(T,t)_p = o(t^{r-1+1/p})$, then taking the supremum over the unit ball yields:
\begin{equation}
\sup_{\substack{f \in L_p \\ \|f\|_p \le 1}} \omega_\varphi^r(Tf,t)_p = o(t^{r-1+1/p}).
\end{equation}
This mandates that for every $f \in L_p[-1,1]$ in the unit ball, the image $Tf$ is a polynomial of degree less than $r$. Because this holds pointwise for every element in the domain, the entire range of $T$ is strictly constrained to the fixed finite-dimensional polynomial space $\mathbb{P}_{r-1}$, directly forcing $T$ to be a finite-rank operator. Thus, the approximation scheme saturates beyond the critical rate $O(t^{r-1+1/p})$.
\end{proof}

\begin{remark}
The introduction of the geometric scaling factor $\gamma = d/m$ from Weyl's Law in Section 3 does not alter the absolute saturation class $O(t^{r-1+1/p})$ established here. The saturation rate is an intrinsic structural barrier of the target quasi-Banach space $L_p[-1,1]$ and the polynomial degree $r$. While the parameter $\gamma$ dictates how quickly the spectrum decays relative to the approximation rank, the spatial constraint forcing the operator to collapse into the finite-dimensional polynomial space $\mathbb{P}_{r-1}$ is purely a consequence of the 1D domain's geometry and the topology of $L_p$ for $0 < p < 1$.
\end{remark}

\section{Concrete Applications and Worked Examples}

To ground the abstract framework, we explicitly evaluate the constructive representation and the quasi-Banach modulus on two canonical operators satisfying the geometric scaling factor $\gamma$. 

\subsection{The 2D Dirichlet Laplacian and Toroidal Moduli}
Let $P(D) = -\Delta$ subject to Dirichlet boundary conditions on the square domain $\Omega = [-1,1]^2$. The eigenvalues are explicitly given by $\lambda_{m,n} = \frac{\pi^2}{4}(m^2 + n^2)$ for $m,n \in \mathbb{N}$ \cite{AT2024}.

By Gauss's circle problem, and in perfect agreement with Weyl's asymptotic formula $N(\mu) \sim \frac{\mathrm{vol}(\Omega)}{4\pi} \mu$ for a 2D domain of area $\mathrm{vol}([-1,1]^2) = 4$, the eigenvalue counting function satisfies:
\begin{equation}
N(\mu) = \#\left\{(m,n) \in \mathbb{N}^2 : m^2 + n^2 \le \frac{4\mu}{\pi^2}\right\} = \frac{\mu}{\pi} + O(\sqrt{\mu}).
\end{equation}
This yields the precise linear growth $N(\mu) \asymp \mu^\gamma$ (where $\gamma = 2/2 = 1$) required to match the approximation number index in Theorem 3.2.

If $T$ is a multiplier operator acting on the eigenbasis $e_{m,n}(x,y)$ such that $T e_{m,n} = \tau_{m,n} e_{m,n}$, the delayed Riesz mean block $g_k = (V_{2^{bk}} - V_{2^{b(k-1)}})T$ strictly isolates the frequencies where $2^{b(k-1)} < \lambda_{m,n} \le 2^{bk}$. The operator quasi-norm in $A_\mu^\rho$ can then be explicitly bounded by the dyadic decay of the multiplier sequence:
\begin{equation}
\|T\|_{A_\mu^\rho} \approx \left( \sum_{k=0}^\infty \left[ 2^{bk\rho} \max_{2^{b(k-1)} < \lambda_{m,n} \le 2^{bk}} |\tau_{m,n}| \right]^\mu \right)^{1/\mu}.
\end{equation}
Thus, the abstract approximation space condition reduces directly to a Besov-type sequence norm on the multipliers. As seen in the smoothing properties of parabolic systems like the heat equation, the generalized solution for $t > 0$ becomes highly regularized ($C^\infty$-smooth) even if the initial data lacks smoothness and contains discontinuities \cite{KL1989}.

\subsection{The 1D Quantum Harmonic Oscillator and Hermite Expansions}
Let $P(D) = -\frac{d^2}{dx^2} + x^2$ acting on $L_2(\mathbb{R})$. The confining quadratic potential restricts the phase space, ensuring a purely discrete spectrum with eigenvalues $\lambda_n = 2n + 1$ for $n \ge 0$. Consequently, the counting function is exactly $N(\mu) = \lfloor (\mu - 1)/2 \rfloor \asymp \mu$, perfectly satisfying the rank-index compatibility of the $H$-operator framework despite the unbounded domain.

The root vectors are the Hermite functions $h_n(x)$. For the quasi-Banach extension ($0 < p < 1$), the Ditzian-Totik modulus on $[-1,1]$ is replaced by the standard translation modulus $\omega^r(f,t)_p$ on $\mathbb{R}$. Consider a spectral multiplier operator defined by $T f = \sum_{n=0}^\infty m_n \langle f, h_n \rangle h_n$.

The localized operator modulus $\omega^r(T, t)_p$ evaluates the worst-case smoothness of the image of the quasi-Banach unit ball. This structural control is particularly relevant when standard Lebesgue spaces fail to provide optimal bounds at critical endpoints \cite{Tar2007}. In such limiting cases, intermediate structures---such as the Lorentz spaces $L^{p,q}$ defined via the condition $t^{-1/p}K(t; f) \in L^q$ using Peetre's $K$-method---provide the exact geometric control necessary to secure strict embedding estimates \cite{Tar2007}.

Applying the saturation bound from Theorem 4.2, if the multiplier $m_n$ decays rapidly enough that $\omega^r(T, t)_p = o(t^{r-1+1/p})$, the range of $T$ collapses entirely. The operator is forcefully truncated to a finite span of low-order Hermite functions, demonstrating how functional saturation geometrically degrades the operator rank.

\section{Spectral Completeness and Abel Summability in $A_\mu^\rho$}

Combining our approximation space classification with Markus's completeness criteria for $H$-operators yields the main spectral result of this paper. We first establish the necessary sequence space embeddings.

\begin{lemma}[Approximation Number Sequence Embedding]
Let $T$ be a compact operator whose approximation numbers satisfy the condition defining the approximation space $A_\mu^\rho$. Using the standard normalization for the Lorentz sequence space $l_{p,q}$, defined by the quasi-norm $\|(x_n)\|_{l_{p,q}} = \left( \sum_{n=1}^\infty [n^{1/p} x_n]^q \frac{1}{n} \right)^{1/q}$, the sequence $(a_n(T))$ belongs to $l_{p,\mu}$ with $1/p = \rho$. Furthermore, if $0 < \rho \le 1/\mu$, then $(a_n(T)) \in l_p$.
\end{lemma}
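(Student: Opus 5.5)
The first assertion is an identification of the defining series, and the second is the standard monotone embedding of Lorentz sequence spaces $l_{p,\mu}\subset l_{p,q}$ for $\mu\le q$, applied with $q=p$.

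\emph{First assertion.} Set $p:=1/\rho$, so $n^{1/p}=n^\rho$. Then
\begin{equation*}
\|(a_n(T))\|_{l_{p,\mu}}^\mu=\sum_{n=1}^\infty \bigl[n^{\rho}a_n(T)\bigr]^\mu\frac1n ,
\end{equation*}
which is exactly the series in the Definition of $A_\mu^\rho$. So $T\in A_\mu^\rho$ if and only if $(a_n(T))\in l_{p,\mu}$, with identical quasi-norms. For $\mu=\infty$ the same identification holds with suprema: the $A_\infty^\rho$ condition is $\sup_n n^\rho a_n(T)<\infty$, which is the $l_{p,\infty}$ quasi-norm.

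\emph{Second assertion.} Since $\rho>0$, the hypothesis $\rho\le 1/\mu$ forces $\mu<\infty$. It then reads $\mu\le 1/\rho=p$, and since $l_{p,p}=l_p$ it suffices to show $l_{p,\mu}\subset l_{p,p}$ on the relevant sequences.

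The key structural input is that approximation numbers are non-increasing: $a_1(T)\ge a_2(T)\ge\cdots$, because rank $<n$ operators are contained in rank $<n+1$ operators. I would use this in two steps.

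\textbf{Step 1: a weak-type bound.} For each $n\ge2$, restrict the series to indices $\lceil n/2\rceil\le k\le n$. On this range $a_k\ge a_n$ by monotonicity, and $k^{\rho\mu-1}\ge c\,n^{\rho\mu-1}$ with $c$ depending only on $\rho\mu$, whatever the sign of $\rho\mu-1$. The range contains about $n/2$ terms, so
\begin{equation*}
\|a\|_{l_{p,\mu}}^\mu\ \ge\ \sum_{n/2\le k\le n}k^{\rho\mu-1}a_k^\mu\ \ge\ c'\,n^{\rho\mu}a_n^\mu .
\end{equation*}
Hence $\sup_n n^{1/p}a_n\le C\|a\|_{l_{p,\mu}}$.

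\textbf{Step 2: interpolation.} Since $p\ge\mu$, split the exponent $p=\mu+(p-\mu)$ and write
\begin{equation*}
\sum_n \bigl[n^{1/p}a_n\bigr]^p\frac1n\ \le\ \Bigl(\sup_n n^{1/p}a_n\Bigr)^{p-\mu}\sum_n\bigl[n^{1/p}a_n\bigr]^\mu\frac1n\ \le\ C\,\|a\|_{l_{p,\mu}}^{p}.
\end{equation*}
The left side equals $\sum_n a_n^p$, so $(a_n(T))\in l_p$ with $\|(a_n)\|_{l_p}\le C\|T\|_{A_\mu^\rho}$.

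The only step needing care is Step 1. Obtaining the weak-type bound from the averaged $l_{p,\mu}$ condition depends essentially on the monotonicity of $(a_n)$, and the constant must be tracked uniformly in both regimes $\rho\mu\ge1$ and $\rho\mu<1$. Once that bound is in place, the rest is a one-line Hölder-type estimate.
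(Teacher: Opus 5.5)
Your proposal is correct and follows the paper's route: you identify the $A_\mu^\rho$ series with the $l_{p,\mu}$ quasi-norm at $p=1/\rho$, then use $\mu\le p$ and the embedding $l_{p,\mu}\hookrightarrow l_{p,p}=l_p$. The only difference is that you prove this embedding by hand (a weak-type bound followed by splitting the exponent) rather than citing it, and in doing so you make explicit that $(a_n(T))$ is non-increasing; the paper's citation relies on this tacitly, and it is genuinely needed because the unrearranged quasi-norm as written does not give the embedding for arbitrary sequences.
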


\begin{proof}
The definition of $T \in A_\mu^\rho$ requires the approximation numbers to satisfy $\sum_{n=1}^\infty [n^\rho a_n(T)]^\mu \frac{1}{n} < \infty$. By substituting $1/p = \rho$, this condition is identical to the defining quasi-norm of the Lorentz space $l_{p,\mu}$, confirming $(a_n(T)) \in l_{p,\mu}$. The parameter restriction $\rho \le 1/\mu$ translates directly to $1/p \le 1/\mu$, which is equivalent to $\mu \le p$. By the classical monotonicity property of Lorentz spaces, the embedding $l_{p,q_1} \hookrightarrow l_{p,q_2}$ holds for any $q_1 \le q_2$. Selecting $q_1 = \mu$ and $q_2 = p$, we obtain $l_{p,\mu} \hookrightarrow l_{p,p} = l_p$, ensuring the approximation sequence is $p$-summable.
\end{proof}

To bridge this embedding to spectral properties, we rely on two foundational completeness and summability results established by Markus \cite{M1966}. 

First, Markus demonstrated that if an $H$-operator $T$ belongs to the Schatten-von Neumann class $\mathfrak{S}_p$ for some $p > 0$, and has a dense range ($\overline{\mathfrak{R}(T)} = X$), then its system of root vectors is complete in $X$ \cite[Theorem 4.1$^\circ$]{M1966}. Second, if such an operator further satisfies a polynomial resolvent growth bound away from the real axis and has its spectrum angularly confined, any vector in its range admits a Fourier expansion with respect to these root vectors that is Abel summable of order $\alpha \ge p$ \cite[Theorem 3.3$^\circ$]{M1966}. 

By combining our constructive approximation framework with these classical bounds, we obtain the main spectral result of this paper.

\begin{theorem}[Root Vector Completeness and Summability in $A_\mu^\rho$]
Let $T \in A_\mu^\rho$ with parameter restriction $0 < \rho \le 1/\mu$, and assume $\overline{\mathfrak{R}(T)} = X$. Then:
\begin{enumerate}
    \item The system of root vectors $\{x_j\}_{j=1}^\infty$ of $T$ (non-zero vectors satisfying $(T-\lambda_j I)^k x_j = 0$ for some $k \ge 1$) is complete in $X$, i.e., $\overline{\mathrm{Span}\{x_j\}} = X$.
    \item For every $y \in \mathfrak{R}(T)$, the Fourier expansion $y \sim \sum c_j x_j$ is Abel summable of order $\alpha > 1/\rho$ to $y$:
    \begin{equation}
    y = \lim_{t \to 0^+} \sum_{k=1}^\infty \left( \sum_{j=n_{k-1}}^{n_k-1} c_j(t) x_j \right),
    \end{equation}
    where the coefficients $c_j = \langle y, x_j^* \rangle$ are defined via the biorthogonal system $\{x_j^*\}$ of root vectors corresponding to the adjoint operator $T^*$. The Abel multipliers of order $\alpha$ are given by $c_j(t) = e^{-|\lambda_j|^\alpha t}$, and $(n_k)$ is a strictly increasing sequence of integers that partitions the spectrum into blocks of identical modulus to guarantee convergence.
\end{enumerate}
\end{theorem}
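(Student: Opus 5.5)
The plan is to reduce both assertions to Markus's two completeness/summability results by placing $T$ in a Schatten-type ideal $\mathfrak{S}_p$ with $p = 1/\rho$. Throughout, $T$ is implicitly a compact $H$-operator; this hypothesis is needed to invoke \eqref{eq:markus_ineq} and Markus's theorems, and I will state it explicitly.

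First I invoke Lemma 5.1. Since $T \in A_\mu^\rho$ and $0<\rho\le 1/\mu$, the approximation numbers satisfy $(a_n(T)) \in l_{p,\mu}\hookrightarrow l_p$ with $p = 1/\rho$. Next I transfer this to eigenvalues and to the other $s$-numbers. By \eqref{eq:markus_ineq} we have $2\sqrt2 C|\lambda_n(T)| \ge a_n(T)$, and also $|\lambda_n(T)| \le \tfrac{8C(C+1)}{2\sqrt2 C}\,\delta_{n-1}(T)$ with $\delta_{n-1}\le a_{n-1}$ (Kolmogorov numbers are dominated by approximation numbers). Hence $(|\lambda_n(T)|)\in l_p$, with only a harmless index shift. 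The chain of equivalences for $H$-operators recalled in Section 2 then gives $T\in\mathfrak{S}_p$. This step is routine.

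For part (1), I apply \cite[Theorem 4.1$^\circ$]{M1966}. Indeed $T$ is an $H$-operator in $\mathfrak{S}_p$ for some $p>0$, and by hypothesis $\overline{\mathfrak{R}(T)}=X$. Markus's theorem therefore yields completeness of the root vectors $\{x_j\}$. I will also note that the biorthogonal system $\{x_j^*\}$ is furnished by the Riesz projections $P_{\lambda_j}$ of $T$ and their adjoints. These exist because $\sigma(T)\setminus\{0\}$ consists of isolated eigenvalues of finite algebraic multiplicity, so the coefficients $c_j=\langle y,x_j^*\rangle$ are well defined.

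For part (2), I apply \cite[Theorem 3.3$^\circ$]{M1966} and must check its two extra hypotheses.
\begin{itemize}
\item \emph{Angular confinement of the spectrum.} This is immediate, since $\sigma(T)\subset\mathbb{R}$ lies in the two rays $\arg\lambda\in\{0,\pi\}$.
\item \emph{Polynomial resolvent growth away from those rays.} This is the main obstacle. Since $T\in\mathfrak{S}_p$, I would use a Carleman-type determinant estimate, $\|(I-\lambda T)^{-1}\|\le \exp(c|\lambda|^{p+\varepsilon})$, combined with the $H$-operator bound $\|(T-\lambda I)^{-1}\|\le C|\mathrm{Im}\,\lambda|^{-1}$ on rays off the real axis. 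A Phragmén--Lindelöf argument in the sectors between the rays then upgrades the exponential bound to polynomial growth of order at most $1$ outside small angles around $\mathbb{R}$. This is exactly where the order condition $\alpha>p=1/\rho$ enters: the Phragmén--Lindelöf step needs sectors of opening less than $\pi/\alpha$ relative to growth order $p$.
\end{itemize}
With these hypotheses checked, Markus's theorem gives Abel summability of order $\alpha$ with multipliers $e^{-|\lambda_j|^\alpha t}$, grouped into blocks $(n_k)$. The blocks are chosen so that each one collects the root vectors belonging to eigenvalues of equal modulus, namely $\pm$ pairs with their multiplicities. The identity $y=\lim_{t\to0^+}\sum_k(\cdots)$ holds for $y\in\mathfrak{R}(T)$, which is the dense class on which Markus's expansion converges.

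The key detail to verify is the index shift and constants in passing from $a_n$ to $\lambda_n$. The genuinely delicate step is matching Markus's admissible order $\alpha\ge p$ with the stated strict inequality $\alpha>1/\rho$. I would take $\alpha>p$ to absorb the $\varepsilon$-loss from the determinant bound.
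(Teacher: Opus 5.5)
Your proposal follows the paper's proof essentially step for step. Lemma 6.1 gives $(a_n(T))\in l_p$ with $p=1/\rho$. Markus's inequality \eqref{eq:markus_ineq} transfers this to $(|\lambda_n(T)|)$ and hence gives $T\in\mathfrak{S}_p$. Theorem 4.1$^\circ$ then gives completeness, and Theorem 3.3$^\circ$ gives Abel summability once its hypotheses are checked from $T\in\mathfrak{S}_p$, the reality of $\sigma(T)$, and the $H$-operator resolvent bound. Your explicit insistence that $T$ be a compact $H$-operator is a fair point that the paper leaves implicit.

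The only divergence is the growth hypothesis, and there your ``main obstacle'' is not one. The paper gets the order-of-growth condition simply by citing Markus's Theorem 2.3 for $T\in\mathfrak{S}_p$, which is the role your Carleman-type estimate plays. Note, however, that a bound $\|(I-\lambda T)^{-1}\|\le \exp(c|\lambda|^{p+\varepsilon})$ cannot hold for all $\lambda$, because of the poles at $\lambda=1/\lambda_j$. It has to be stated off exceptional discs or on a sequence of circles.

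Growth away from the real axis needs no Phragm\'en--Lindel\"of argument at all. Since $I-\lambda T=-\lambda\,(T-\lambda^{-1}I)$ and $|\mathrm{Im}\,\lambda^{-1}|=|\mathrm{Im}\,\lambda|/|\lambda|^2$, the $H$-operator estimate gives directly $\|(I-\lambda T)^{-1}\|\le C|\lambda|/|\mathrm{Im}\,\lambda|$. This is bounded on every closed sector avoiding $\mathbb{R}$. The Phragm\'en--Lindel\"of reasoning you sketch, with sector openings tied to $\alpha$, belongs inside the proof of a Lidskii--Markus type summability theorem, not in the verification of its hypotheses.

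Similarly, there is nothing delicate about matching $\alpha\ge p$ with $\alpha>1/\rho$, since the strict inequality is the weaker claim.
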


\begin{proof}
By Lemma 6.1, the condition $T \in A_\mu^\rho$ alongside the parameter restriction $0 < \rho \le 1/\mu$ guarantees that the sequence of approximation numbers $(a_n(T))$ resides within $l_p$ where $1/p = \rho$.

From Markus's fundamental bounds \eqref{eq:markus_ineq}, the eigenvalues are controlled by the approximation numbers:
\begin{equation}
|\lambda_n(T)| \le \frac{8C(C+1)}{2\sqrt{2}C} \delta_{n-1}(T) \le C' a_n(T).
\end{equation}
Since $(a_n(T)) \in l_p$, it is immediately guaranteed that the eigenvalue sequence satisfies $(|\lambda_n(T)|) \in l_p$, which implies $T \in \mathfrak{S}_p$.

With $T \in \mathfrak{S}_p$ and dense range $\overline{\mathfrak{R}(T)} = X$, Markus's Theorem $4.1^\circ$ directly guarantees that the system of root vectors is complete in $X$ (applying the theorem with $A = T$ and setting the perturbation operator to $0 \in \mathfrak{S}_\infty$ such that $B = T$).

To apply Markus's Theorem $3.3^\circ$ for Abel summability, we must explicitly verify its two hypotheses for $T$. First, the required growth limit (a) as $|\lambda| \to \infty$ is an entire-function order condition on the resolvent; because we have established $T \in \mathfrak{S}_p$, Markus's Theorem 2.3 directly guarantees that $(I-\lambda T)^{-1}$ satisfies this precise exponential type and order condition. Second, because $T$ is an $H$-operator, its spectrum $\sigma(T)$ is strictly real, meaning all non-zero eigenvalues lie on the real axis, trivially satisfying the angular confinement condition (b) for any angle $\Lambda$ containing the real line, with the $H$-operator resolvent bound $\|(T - \lambda I)^{-1}\| \le C|\mathrm{Im}\,\lambda|^{-1}$ securing the required boundedness on the angular boundaries. With both hypotheses verified and $T \in \mathfrak{S}_p$, the expansion is Abel summable for any order $\alpha > p$. Substituting $p = 1/\rho$, we conclude that the expansion is Abel summable of order $\alpha > 1/\rho$.
\end{proof}

\end{document}